\title{Genus of  curves in  smooth  hypersurfaces}
\author{ B. Wang}
\begin{document}

\maketitle

\begin{abstract}
This is the continuation of our paper [9].  In this paper which is self contained, we would like to give 
a different obstruction formula to the FIRST order deformation of the pair of a smooth curve and a smooth hypersurface. 
This obstruction formula leads to a genus formula for a smooth curve in a smooth hypersurface.
As an application 
we show that smooth elliptic curves  in  a smooth  hypersurface of degree 
$$h\geq 2n-1$$ in the projective space $\mathbf P^n, n\geq 3,$
can't deform in the first order to all hypersurfaces of the same degree.   In  particular, there are no smooth elliptic
curves in generic hypersurfaces of
degree $$h\geq 2n-1.$$ 
This application in return leads to a study of the deformation of the pair mentioned above.

\end{abstract}

\pagestyle{myheadings}
\thispagestyle{plain}
\markboth{ B. Wang}{Genus of curves in smooth hypersurfaces}

\section{Introduction}\par

Our study of the genus of a curve is originated from our study of the obstructions to  deformation of pairs
of varieties. We hope the numerical bounds and invariants we obtained can support the general study of the deformation of
pairs of varieties. 

 \medskip

Let's briefly introduce the deformation in consideration.  
Let $f_0\subset \mathbf P^n$ be a smooth hypersurface of degree $h$, where
$\mathbf P^n$ is the projective space of dimension $n\geq 3 $ over the complex numbers.
We'll denote the section in $H^0(\mathcal O_{\mathbf P^n}(h))$ that defines $f_0$ also
by $f_0$. Let $C_0\subset f_0$ be a smooth curve. We investigate the existence of a family of pairs
$C_t\subset  f_t$, the curves $C_t$ of degree $d$ and the hypersurfaces $f_t$ of degree $h$ in the projective space
$\mathbf P^n$\, where $t$ is in a variety. 
 We call $C_t\subset f_t$, a ``full" deformation of the pair. If $C_t, f_t$ are algebraic and $\{f_t\}_{all \ t} $ form an 
open set of the space of all hypersurfaces, then around $C_0, f_0$, they can be trimmed to a versal subvariety defined
 by Clemens and Ran in [3]. 
  A similar question was also investigated by L. Chiantini, Z. Ran 
in [4].  In general there is  a well-known Kodaira's deformation theory ([6])
about the fibred submanifold $C_0\subset f_0$ in a fibred complex manifold $f_0$, that says a sufficient condition for the 
$C_0$ to deform to all the other submanifolds  is $$H^1(N_{C_0}f_0)=0.$$ 
But in general, it is not clear that this condition is also a necessary condition, i.e if $H^1(N_{C_0}f_0)\neq 0$, 
$C_0$ may still be able to deform to all the other hypersurfaces (We don't have a proof of that yet).  In general, it may seem to be obvious
that $H^1(N_{C_0}f_0)=0$ is not a necessary condition for the pair to deform in all directions of the moduli space containing $f_0$, but
the situation could be very subtle if $f_0$ is a smooth hypersurface and $C_0$ is a curve, especially in the case where
$f_0$ has  a low dimension and $C_0$ has  a  low genus. 
This converse of the Kodaira's theorem reveals  subtle differences  in deformation theory of the pair of hypersurfaces and their
subvarieties. 
We are interested in the geometric difference between the existence of the first order deformation of the pair $C_0\subset f_0$ 
and
the existence of the ``full" deformation of the pair.  This paper is just the first step in this attempt, in which
 we prove theorem 1.2  below.  It gives a necessary condition (i.e. an obstruction) for $C_0$ to deform to ``other hypersurfaces" in the FIRST order. This condition in  theorem  1.2  below is different from that in 
[9].  The main application of it is the proof of the following,\bigskip

\begin{theorem}  
 There are no smooth elliptic curves  in  generic hypersurfaces of degree 
$$h\geq 2n-1$$ in the projective space $\mathbf P^n$. 
\end{theorem}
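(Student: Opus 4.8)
\section*{Proof proposal for Theorem 1.1}

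The plan is to argue by contradiction and to reduce the ``generic'' statement of Theorem 1.1 to the sharper \emph{first-order} assertion announced in the abstract (the one extracted from the obstruction formula of Theorem 1.2). Suppose a generic hypersurface of degree $h\geq 2n-1$ in $\mathbf P^n$ contained a smooth elliptic curve. Since $|\mathcal O_{\mathbf P^n}(h)|$ is irreducible and the Hilbert scheme has only countably many components, a standard constructibility-and-countability argument (a countable union of proper closed subsets cannot exhaust an irreducible variety over an uncountable field) produces, for one fixed degree $d$, a single component $\Sigma$ of the incidence variety
\[
\Sigma=\{(C,f):\ C\subset f,\ C\ \mbox{smooth elliptic of degree}\ d,\ f\ \mbox{smooth of degree}\ h\}
\]
whose projection $q\colon\Sigma\to|\mathcal O_{\mathbf P^n}(h)|$ is dominant.

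In characteristic zero, generic smoothness then makes $dq$ surjective at a general pair $(C_0,f_0)\in\Sigma$, where $f_0$ is smooth and $C_0$ is smooth elliptic by the genericity built into $\Sigma$. The key step is to identify the image of $dq$ with the first-order deformations of $f_0$ that $C_0$ can follow. Writing $N_{f_0}\mathbf P^n=\mathcal O_{f_0}(h)$ and using the normal-bundle sequence of $C_0\subset f_0\subset\mathbf P^n$,
\[
0\to N_{C_0}f_0 \to N_{C_0}\mathbf P^n \stackrel{\rho}{\longrightarrow} \mathcal O_{C_0}(h)\to 0,
\]
the obstruction to carrying $C_0$ along a deformation $v\in H^0(\mathcal O_{f_0}(h))$ of $f_0$ is the image of $v|_{C_0}$ under the connecting map $\partial\colon H^0(\mathcal O_{C_0}(h))\to H^1(N_{C_0}f_0)$. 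Hence $dq$ is surjective if and only if the composite
\[
H^0(\mathcal O_{f_0}(h))\stackrel{r}{\longrightarrow} H^0(\mathcal O_{C_0}(h))\stackrel{\partial}{\longrightarrow} H^1(N_{C_0}f_0)
\]
vanishes, i.e. $C_0$ deforms in the first order to \emph{all} hypersurfaces of degree $h$. This is exactly the configuration forbidden by the application of Theorem 1.2 to elliptic curves, and the resulting contradiction proves Theorem 1.1.

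It therefore remains to show that for $g=1$ and $h\geq 2n-1$ the composite $\partial\circ r$ cannot vanish. Here I would exploit that an elliptic curve has trivial canonical bundle, $\omega_{C_0}\cong\mathcal O_{C_0}$, so Serre duality turns the target into $H^1(N_{C_0}f_0)^*\cong H^0\!\bigl((N_{C_0}f_0)^\vee\bigr)$ and converts $\partial\circ r$ into a concrete multiplication pairing against the extension class of the sequence above, pulled back from $\mathbf P^n$. The genus formula of Theorem 1.2 packages precisely this pairing; specializing it to $g=1$, the numerical threshold $h\geq 2n-1$ is what is incompatible with $\partial\circ r=0$, using $\deg N_{C_0}\mathbf P^n=(n+1)d$ and $\deg N_{C_0}f_0=(n+1-h)d$ for the elliptic $C_0$.

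I expect the main obstacle to be this last nonvanishing: one must prove that the obstruction pairing is genuinely nonzero for \emph{every} admissible degree $d$ once $h\geq 2n-1$, rather than merely that an Euler-characteristic count is unfavorable, since the naive estimates leave room for cancellation and the nonvanishing has to be read off from the obstruction formula of Theorem 1.2 itself. The reduction in the first two paragraphs is comparatively routine, its only delicate point being that the general pair really has $f_0$ smooth and $C_0$ smooth elliptic, so that the deformation-theoretic interpretation of $dq$ is valid.
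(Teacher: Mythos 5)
Your reduction in the first two paragraphs matches the paper's, which states it in a single line: if generic hypersurfaces of degree $h$ contain smooth elliptic curves, then $P_E^s$ is surjective for a general such pair. Your identification of the surjectivity of $dq$ with the vanishing of $\partial\circ r$ is the standard deformation-theoretic interpretation and is consistent with the paper's setup. The genuine gap is in the final step, which you yourself flag as ``the main obstacle'' but never carry out. The genus formula (Corollary 1.3) specialized to $g=1$ gives $(h-2n+1)d=m-h^0(N_{C_0}f_0(1))-k$ with $k\geq 0$ and $m=\dim(B)$ for a subspace $B\subset H^0(N_{C_0}f_0(1))$; the only numerical conclusion this yields is $(h-2n+1)d\leq 0$, i.e.\ $h\leq 2n-1$. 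That recovers Clemens' bound (no elliptic curves for $h\geq 2n$) but says nothing about the boundary case $h=2n-1$, which is the entire content of the improvement claimed in Theorem 1.1. Your proposed route via Serre duality and a ``multiplication pairing'' is an unimplemented sketch, and the worry you raise --- that an Euler-characteristic count leaves room for cancellation --- is exactly what happens: the count alone cannot exclude $h=2n-1$.

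What the paper actually does for $h=2n-1$ (Proposition 5.1, part (2)) is prove the strict inequality $m<h^0(N_{C_0}f_0(1))$. This requires a delicate choice of the linear forms $L_0,\dots,L_h$ defining the auxiliary family $A$: one fixes a point $q\in C_0$ and a hyperplane $E_q\subset T_qf_0$, proves (Claim 5.1, via a Jacobian computation and an irreducibility/symmetry argument on the incidence set $S_{E_q}$) that the $L_i$ can be chosen so that every section in $B$ lies in $E_q$ at $q$ while the $L_i$ still satisfy the disjointness condition (1.6), and then uses the surjectivity of $P_E^s$ over the full linear system $E$ together with the infinitesimal $GL(n+1)$-action to exhibit a section of $N_{C_0}f_0(1)$ that does not lie in $E_q$ at $q$, hence is not in $B$. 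Without this construction, or some equivalent argument forcing $B$ to be a proper subspace, your proof establishes only the weaker bound $h\geq 2n$ and does not prove the stated theorem.
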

\bigskip

{\bf Remark} This theorem improves Clemens''s bound in [2] by 1.  But  this is only a weaker version of proposition 5.1 below.  Because the result in the theorem is noticeable and requires
 no buildups in definitions, we state it as the first theorem.  The simple bound in the theorem  involves many complex issues which may indicate the geometric difference in the deformation 
theory of a pair of projective varieties. See the remark after proposition 5.1.

\bigskip

The first order deformation of a pair of varieties were rigorously  defined and studied by Roy Smith and Robert Varley in [7].  Their main
interest lies in a pair of a smooth variety and its divisor,  and a sufficient condition for the pair to deform in the first order. Even though it is only for a divisor, it is still  a very important view and valid in many extensions. But we are going to bypass it in this paper because
we concentrate on a different situation. \bigskip

\noindent {\bf Setting for theorem 1.2.}\par

We need to give a formal description on the first order deformation of the pair. See [7] for a more general hypercohomological  approach. 
Let $H^0(\mathcal O_{\mathbf P^n}(h))$ denote the vector space of homogeneous polynomials of degree $h$ in $n+1$ variables.
We use the same letter $f_0\in H^0(\mathcal O_{\mathbf P^n}(h))$ to denote the hypersurface $div(f_0)\subset \mathbf P^n$,
homogeneous polynomial $f_0$, and its projectivization in  $\mathbf P(H^0(\mathcal O_{\mathbf P^n}(h)))$. 
Let $S\subset \mathbf P(H^0(\mathcal O_{\mathbf P^n}(h)))$ be a subvariety containing $f_0$ which is
a smooth point of $S$. Also assume that $f_0$ is a smooth hypersurface.  Let 
\begin{eqnarray} && X_S\subset S\times \mathbf P^n,\\&&
X_S=\{(f, x): f\in S, f(x)=0\}.\end{eqnarray}
be the universal hypersurface.
\par
Let $C$ be a smooth projective curve of genus $g$, and
$$c_0: C\to   f_0\subset P^n$$
a smooth imbedding of $C$ to $f_0$. Then 
$$\bar c_0: C\to \{f_0\}\times f_0\subset X_S$$ is the induced imbedding.
The projection $$P_S: X_S\to S$$ induces a map on the sections of bundles over $C$, 
\begin{equation} P_S^s: H^0(\bar c_0^\ast(TX_S)) \to T_{f_0}S,\end{equation}
where $ T_{[f_0]}S\simeq H^0(T_{[f_0]}S\otimes \mathcal O_C)$ is the space of global sections of the 
 trivial bundle whose each fibre is  $T_{f_0}S$. A  pre-image of $P_S^s$ represents a first order deformation of the pair.
\medskip

In this paper we consider two specific parameter spaces for $S$:

\smallskip

{\bf Assumption (1) } The first subvariety $S$ under consideration  is the collection of hypersurfaces 
in the following form:
$$ f_0+\sum_{i=0}^h a_i L_0\cdots\hat L_i\cdots L_{h},\quad (\hat L_i \ is \ omitted)$$
where $L_i\in H^0(\mathcal O_{\mathbf P^n}(1)), i=0, \cdots, h$ are fixed sections whose zeros  are distinct, i.e.
\begin{equation} div(L_i)\neq  div(L_j), i\neq j. \end{equation}
Let $$A'=\mathbf C^{h+1}=\{(a_0, \cdots, a_h)\}$$ be the parameter space of the family. Let
$A\subset A'$ that parametrizes smooth hypersurfaces. So $S=A$ in this case.\medskip

{\bf Assumption (2) }  Secondly $S$ is the entire space $\mathbf P(H^0(\mathcal O_{\mathbf P^n}(h)))$. We will denote
$\mathbf P(H^0(\mathcal O_{\mathbf P^n}(h)))$ by $E$.  So $S=E$ in this case.
\bigskip

The exact formula for the genus of the curve in the hypersurface will depend on the hypersurface. It is not surprised 
to see the genus is a semi-continuous function on the space of hypersurfaces. Thus our genus formula will involve the space $A$. 
Let's first introduce the term reflecting this dependence. Continuing with the notations for the assumption (1), let

\begin{equation} u_i=L_0{\partial\over \partial a_0}-L_i{\partial\over \partial a_i}, i=1, \cdots, h
\end{equation} be sections
of $ TA\otimes \mathcal O_{\mathbf P^n}(1)$. It is easy to see $u_i$ annihilate the universal polynomial $F$,
$$F(a, x)=f_0(x)+\sum_{i=0}^h a_i L_0(x)\cdots\hat L_i(x)\cdots L_{h}(x),\quad (\hat L_i \ is \ omitted).$$ 
Hence $u_i$ are tangent to
$X_A$ at all points of $X_A$. Let $G(1)$ be the sub-sheave generated by $u_i$. 
We have an imbedding map of sheaves, 
$$\begin{array}{ccc} \bar c_0^\ast(G(1)) & \rightarrow &  \bar c_0^\ast(TX_A(1)).\end{array}$$
Let $\phi_3$ be the induced map on their $H^1$ groups,
$$\begin{array}{ccc} H^1(\bar c_0^\ast(G(1))) & \stackrel{\phi_3}\rightarrow &  H^1(\bar c_0^\ast(TX_A(1)))\end{array}$$

We'll use the notations: 
  $h^i(E)$ denotes the dimension of $H^i(E)$ for any sheaf $E$. For any linear map $\alpha$, $Im(\alpha)$ denotes
the image
of the map $\alpha$. Let $N_{c_0}V$ denote the pull-back of any subbundle $V$ of $T\mathbf P^n|_{C_0}$ to $C$. 
Let $\mathcal L=c_0^\ast(\mathcal O_{\mathbf P^n}(1))$.  Note $d=deg(\mathcal L)$.
\bigskip

\begin{theorem} Let $f_0, C_0, A$ be as above.  Let \begin{equation} \{L_i=0\}\cap \{L_j=0\}\cap C_0=\emptyset, \quad i\neq j.\end{equation}
Also 
assume $P_A^s$ is surjective.    Then   
\begin{equation} \begin{array}{cc}  \sigma(c_0, f_0, A):=& (h-2n)d+(n-1)(g-1)\\
&+h^0(c_0^\ast(Tf_0(1))+(h+1)h^1(\mathcal L)\\
&-dim(Im(\phi_3))-h^1(c_0^\ast(T\mathbf P^n(1)) =0,\end{array}\end{equation}
\end{theorem}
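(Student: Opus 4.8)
The statement is an Euler--characteristic identity, so the plan is to compute $\chi(\bar c_0^\ast(TX_A(1)))$ by Riemann--Roch and then convert every term of $\sigma$ into cohomological dimensions that cancel. Two exact sequences on $C$ drive everything. The first is the relative tangent sequence of $P_A\colon X_A\to A$, pulled back along $\bar c_0$ and twisted by $\mathcal L$; since $f_0$ is smooth, $P_A$ is smooth along the fibre over $f_0$, so this reads
\[0\to c_0^\ast(Tf_0(1))\to \bar c_0^\ast(TX_A(1))\stackrel{d\bar P_A}{\longrightarrow} \mathcal L^{\oplus(h+1)}\to 0.\]
The second comes from the sheaf generated by the $u_i$. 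I would first record that $\bar c_0^\ast(G(1))$ maps isomorphically, under $d\bar P_A$, onto the subsheaf of $\mathcal L^{\oplus(h+1)}$ spanned by the $\ell_0\partial_{a_0}-\ell_i\partial_{a_i}$, where $\ell_i=c_0^\ast(L_i)$, and that the functional $\Phi=(\prod_{k\neq 0}\ell_k,\dots,\prod_{k\neq h}\ell_k)\colon \mathcal L^{\oplus(h+1)}\to\mathcal L^{\otimes(h+1)}$ annihilates it.

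The decisive structural step, and the one I expect to be the main obstacle, is to promote $\bar c_0^\ast(G(1))$ to a genuine subbundle. Here the hypothesis $\{L_i=0\}\cap\{L_j=0\}\cap C_0=\emptyset$ enters precisely: it forces at every point of $C$ at most one $\ell_i$ to vanish, so the generating matrix has rank $h$ everywhere and $\Phi$ is surjective. Hence
\[0\to \bar c_0^\ast(G(1))\to \mathcal L^{\oplus(h+1)}\stackrel{\Phi}{\longrightarrow}\mathcal L^{\otimes(h+1)}\to 0\]
is exact with locally free terms. Consequently $Q:=\bar c_0^\ast(TX_A(1))/\bar c_0^\ast(G(1))$ is locally free of rank $n$, and the differential $d\pi$ of the projection $X_A\to\mathbf P^n$, which kills the base directions spanned by the $u_i$, factors through a surjection $Q\to c_0^\ast(T\mathbf P^n(1))$ of rank-$n$ bundles, hence an isomorphism. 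The point needing care is exactly that $\bar c_0^\ast(G(1))$ be saturated: any torsion in the quotient would destroy this identification and the ensuing $h^1$ count. From the cohomology sequence of $0\to \bar c_0^\ast(G(1))\to \bar c_0^\ast(TX_A(1))\to Q\to 0$ and the right-exactness of $H^1$ on a curve, I then read off $dim(Im(\phi_3))=h^1(\bar c_0^\ast(TX_A(1)))-h^1(Q)=h^1(\bar c_0^\ast(TX_A(1)))-h^1(c_0^\ast(T\mathbf P^n(1)))$, which already cancels the last two displayed summands of $\sigma$ against $-h^1(\bar c_0^\ast(TX_A(1)))$.

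Next I would use surjectivity of $P_A^s$ to control $h^0(\bar c_0^\ast(TX_A(1)))$. The subtlety is that $P_A^s$ is the untwisted statement $H^0(\bar c_0^\ast(TX_A))\to T_{f_0}A$, whereas the first sequence is twisted. I would bridge this with the multiplication square comparing $H^0(\mathcal L)\otimes H^0(\bar c_0^\ast(TX_A))$ with $H^0(\bar c_0^\ast(TX_A(1)))$: since $H^0(\mathcal L)\otimes T_{f_0}A\to H^0(\mathcal L^{\oplus(h+1)})$ is onto and $\mathrm{id}\otimes P_A^s$ is onto, commutativity forces the twisted evaluation $H^0(\bar c_0^\ast(TX_A(1)))\to H^0(\mathcal L^{\oplus(h+1)})$ to be onto as well. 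Equivalently the connecting map of the first sequence vanishes, giving $h^0(\bar c_0^\ast(TX_A(1)))=h^0(c_0^\ast(Tf_0(1)))+(h+1)h^0(\mathcal L)$.

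Finally I assemble. Riemann--Roch on $C$ gives $\chi(\bar c_0^\ast(TX_A(1)))=(2n+1)d+(n+h)(1-g)$, using $\deg c_0^\ast(T\mathbf P^n)=(n+1)d$, $\mathrm{rk}\,Tf_0=n-1$, and additivity of $\chi$ in the first sequence. Substituting the value of $h^0(\bar c_0^\ast(TX_A(1)))$ just obtained yields $h^1(\bar c_0^\ast(TX_A(1)))$, and feeding this together with the expression for $dim(Im(\phi_3))$ into $\sigma$ collapses every cohomological term, leaving $(h+1)\left(d+(1-g)-(d+1-g)\right)=0$. I expect the only genuine difficulties to be the saturation and local-freeness of $\bar c_0^\ast(G(1))$ discussed above and the careful bookkeeping of the $\mathcal O(1)$-twists, both of which are resolved by the two hypotheses of the theorem.
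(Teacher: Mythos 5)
Your proof is correct, and its skeleton coincides with the paper's: both rest on the long exact cohomology sequence of $0\to\bar c_0^\ast(G(1))\to\bar c_0^\ast(TX_A(1))\to Q\to 0$ with $Q\simeq c_0^\ast(T\mathbf P^n(1))$, the resulting identity $dim(Im(\phi_3))=h^1(\bar c_0^\ast(TX_A(1)))-h^1(c_0^\ast(T\mathbf P^n(1)))$, the surjectivity of $P_A^s$ to make $h^0$ and $h^1$ additive across the relative tangent sequence, and Riemann--Roch. Where you genuinely diverge is in how the two structural inputs are established. The paper imports the isomorphism $TX_W(1)/G(1)\simeq T_{W/A}(1)$ from Clemens' Theorem 2.1, which is proved globally on $X_W$ by splitting $T(A\times W)(1)/G(1)$ with the section $\sigma=s\,\partial/\partial a_0$ and comparing first Chern classes of the two complementary quotients; you instead prove the isomorphism directly by exhibiting $\bar c_0^\ast(G(1))$ as the kernel of the surjection $\Phi:\mathcal L^{\oplus(h+1)}\to\mathcal L^{\otimes(h+1)}$ (which is exactly where hypothesis (1.6) is consumed, guaranteeing saturation) and then identifying $G(1)$ with $\ker d\pi$, so that $d\pi$ induces a surjection, hence an isomorphism, of rank-$n$ bundles along $C_0$. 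This makes the role of (1.6) more transparent than the paper's Chern-class comparison. The second divergence is milder: the paper upgrades surjectivity of $P_A^s$ to an actual bundle splitting $\bar c_0^\ast(TX_A(1))\simeq\mathcal L^{\oplus(h+1)}\oplus c_0^\ast(Tf_0(1))$ via the pointwise-independent sections $\partial/\partial a_k-\beta_k$ and then applies Riemann--Roch only to $c_0^\ast(Tf_0(1))$, whereas you extract just the cohomological consequence (vanishing of the connecting map) from the multiplication square and apply Riemann--Roch to the full bundle $\bar c_0^\ast(TX_A(1))$ and to $\mathcal L$. The two bookkeepings are equivalent by additivity of $\chi$, and your final cancellation $(h+1)\bigl(d+(1-g)-(d+1-g)\bigr)=0$ checks out; what the paper's version buys is the explicit splitting reused later (formula (3.3) in Lemma 3.1 and the map $\pi$), while yours buys a self-contained argument that does not need Section 2 at all.
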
\bigskip

In this formula, the most difficult term is $h^0(c_0^\ast(Tf_0(1))$, but the most intriguing term is $dim(Im(\phi_3))$. 
To understand $dim(Im(\phi_3))$ better, we 
introduce two more terms $m, k$.
 Let  $m$ be the dimension of the image $B$ of the following composition map $\phi_6\circ \phi_0$,
\begin{equation}\begin{array}{ccccc} H^0(\bar c_0^\ast(G(1))) & \stackrel{\phi_0}\rightarrow &  H^0(\bar c_0^\ast(TX_A(1)))&
\stackrel{\phi_6}\rightarrow & H^0(N_{c_0}f_0(1)))
\end{array}\end{equation}
where $\phi_6$ is induced from  the composition map of bundles over $C$, 
$$\bar c_0^\ast(TX_A(1))\stackrel{\pi}\rightarrow c_0^\ast(Tf_0(1))\to N_{c_0}f_0(1),$$
where the projection map $\pi$ is an important map that is induced from the splitting in the formula (3.3) below, and the $\pi$ exists only because
$C_0$ can deform to all the other hypersurfaces in $A$ in the first order, i.e. $P_A^s$ is surjective.
 So $$B=Im(\phi_6\circ \phi_0).$$ 
Now consider the map
\begin{equation}\begin{array} {ccc}
H^1(\oplus_m\mathcal O_C) &\stackrel{\phi_5}\rightarrow &H^1(N_{c_0}f_0(1)))\end{array}\end{equation}
where $\phi_5$ is induced from the bundle map of the trivial bundle to the normal bundle $c_0^\ast(N_{c_0}f_0(1))$, 
$$\oplus_m\mathcal O_C\simeq B\otimes \mathcal O_C\to c_0^\ast(N_{c_0}f_0(1)) .$$
  
 Let $k=dim(ker(\phi_5))$.  Note that
$B$ is unique up to an isomorphism but $k, m$ are uniquely determined by the sections $L_0, \cdots, L_h$ and
the  first order deformations of $C_0$ to hypersurfaces collected in $A$.\bigskip

Applying theorem  1.2  to smooth  curves in a smooth hypersurfaces, we obtain that  \bigskip

\begin{corollary} (\em Genus formula) \par
Continuing from theorem 1.2 (with the same assumptions), 
let $g$ be the genus of a smooth curve $C_0$ in a general hypersurface of degree $h$ in
$\mathbf P^n$. In addition, we assume $d> 4(g-1)$. 
Then
$$g={(h-2n+1)d+h^0(N_{C_0}f_0(1))-n+4+k\over m-n+4}.$$

\end{corollary}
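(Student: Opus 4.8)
\emph{Proof plan.} The plan is to substitute into Theorem 1.2 the vanishings and Riemann--Roch computations forced by the single inequality $d>4(g-1)$, and then to rewrite the term $dim(Im(\phi_3))$ as $gm-k$. First I would note that $deg(TC_0(1))=d+2-2g$, so that $H^1(TC_0(1))=0$ holds \emph{precisely} when $d>4(g-1)$; at the same time $deg(\mathcal L)=d$, $deg(\mathcal L^2)=2d$ and $deg(\mathcal L^{h+1})=(h+1)d$ all exceed $2g-2$, whence $h^1(\mathcal L)=0$, $h^1(\mathcal L^2)=0$ and $h^1(\mathcal L^{h+1})=0$. The twisted Euler sequence $0\to\mathcal L\to(\mathcal L^2)^{\oplus(n+1)}\to c_0^\ast(T\mathbf P^n(1))\to0$ then gives $h^1(c_0^\ast(T\mathbf P^n(1)))=0$, while the twisted tangent--normal sequence $0\to TC_0(1)\to c_0^\ast(Tf_0(1))\to N_{c_0}f_0(1)\to0$ together with $H^1(TC_0(1))=0$ yields $h^0(c_0^\ast(Tf_0(1)))=h^0(N_{c_0}f_0(1))+h^0(TC_0(1))$, where $h^0(TC_0(1))=d-3(g-1)$ by Riemann--Roch. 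Feeding these into the formula of Theorem 1.2 and collecting terms collapses it to
\[ dim(Im(\phi_3))=(h-2n+1)d+(n-4)(g-1)+h^0(N_{c_0}f_0(1)). \]

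The crux is then the identity $dim(Im(\phi_3))=gm-k$. Along $C$ the family $X_A\to A$ gives the relative tangent sequence
\[ 0\to c_0^\ast(Tf_0(1))\to\bar c_0^\ast(TX_A(1))\to\mathcal L^{\oplus(h+1)}\to0, \]
the quotient being $\mathcal L^{\oplus(h+1)}$ because $P_A\circ\bar c_0$ is the constant map to $[f_0]$. Using the splitting $\pi$ of this sequence (formula (3.3)) I would identify $\bar c_0^\ast(TX_A(1))\cong c_0^\ast(Tf_0(1))\oplus\mathcal L^{\oplus(h+1)}$; since $h^1(\mathcal L^{\oplus(h+1)})=0$ the projection induces an isomorphism $\pi_*$ on $H^1$, and since $H^1(TC_0(1))=0$ the quotient $r\colon c_0^\ast(Tf_0(1))\to N_{c_0}f_0(1)$ induces an isomorphism $r_*$ on $H^1$. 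Hence the $H^1$--analogue $\phi_6'=r_*\circ\pi_*$ of $\phi_6$ is an isomorphism. As $\phi_6'$ is injective, $dim(Im(\phi_3))=dim(Im(\phi_6'\circ\phi_3))=dim(Im(\psi_*))$, where $\psi=r\circ\pi\circ\iota\colon\bar c_0^\ast(G(1))\to N_{c_0}f_0(1)$ is the sheaf map (with $\iota$ the inclusion) whose induced $H^0$--map is $\phi_6\circ\phi_0$, of image $B$.

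It remains to match $Im(\psi_*)$ with $Im(\phi_5)$ on $H^1$. Writing $\ell_k=c_0^\ast(L_k)\in H^0(\mathcal L)$, the pulled--back $a$--gradient of $F$ is the map $\Phi\colon\mathcal L^{\oplus(h+1)}\to\mathcal L^{h+1}$ acting on the $j$-th summand as multiplication by $\prod_{k\neq j}\ell_k$, and $\bar c_0^\ast(G(1))=\ker(\Phi)$ is generated by the sections $\bar c_0^\ast(u_i)=\ell_0e_0-\ell_ie_i$. The disjointness hypothesis $\{L_i=0\}\cap\{L_j=0\}\cap C_0=\emptyset$ ensures that at most one $\ell_k$ vanishes at any point of $C$, and a stalkwise check then shows that these $h$ sections generate $\ker(\Phi)$ everywhere; thus $\bar c_0^\ast(G(1))$ is globally generated, so $Im(\psi)$ equals the subsheaf $\langle B\rangle\subset N_{c_0}f_0(1)$ generated by $B$, which is also the image of the evaluation $B\otimes\mathcal O_C\to N_{c_0}f_0(1)$ defining $\phi_5$. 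Factoring both $\psi$ and this evaluation through their common surjection onto $\langle B\rangle$, and using that on a curve a surjection of sheaves is surjective on $H^1$ (the cokernel has no $H^2$), I get $Im(\psi_*)=Im(\phi_5)=j_*(H^1(\langle B\rangle))$ with $j\colon\langle B\rangle\hookrightarrow N_{c_0}f_0(1)$. Since $h^1(\mathcal O_C)=g$ this is precisely $dim(Im(\phi_3))=dim(Im(\phi_5))=gm-k$.

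Finally I would substitute $dim(Im(\phi_3))=gm-k$ into the reduced relation and solve for $g$, obtaining
\[ g(m-n+4)=(h-2n+1)d+h^0(N_{c_0}f_0(1))-n+4+k, \]
which is the asserted genus formula (the denominator $m-n+4$ being nonzero under the running hypotheses). The one genuinely delicate point is the middle step: deducing global generation of $\bar c_0^\ast(G(1))$ from the disjointness of the $L_i$ on $C_0$, and then identifying the two $H^1$--images through the common quotient $\langle B\rangle$. Everything else is Riemann--Roch bookkeeping governed by $d>4(g-1)$.
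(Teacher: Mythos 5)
Your proposal is correct and reproduces the paper's overall skeleton for Corollary 1.3: the vanishings $h^1(\mathcal L)=0$ and $h^1(c_0^\ast(T\mathbf P^n(1)))=0$ via Serre duality and the twisted Euler sequence, the identity $h^0(c_0^\ast(Tf_0(1)))=h^0(N_{c_0}f_0(1))+d+3-3g$ from the tangent--normal sequence, and substitution into Theorem 1.2; the arithmetic you carry out matches the paper's formulas (4.2)--(4.3). Where you genuinely diverge is in the proof of the key input $\dim(Im(\phi_3))=mg-k$, which the paper isolates as Lemma 3.1. The paper proves that lemma by writing an explicit \v{C}ech cocycle representative of the image of $\phi_3$, comparing it through a commutative diagram with the map $\phi_7$ induced by $B\otimes\mathcal O_C\to c_0^\ast(Tf_0(1))$, and killing the discrepancy coming from sections tangent to $C_0$ using $H^1(TC\otimes\mathcal L)=0$. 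You instead argue at the level of sheaf maps: the disjointness hypothesis (1.6) makes $\bar c_0^\ast(G(1))$ globally generated by the $\bar c_0^\ast(u_i)$, so the composite $\psi=r\circ\pi\circ\iota$ and the evaluation map $B\otimes\mathcal O_C\to N_{c_0}f_0(1)$ share the same image subsheaf $\langle B\rangle$, and right-exactness of $H^1$ on a curve then identifies both $H^1$-images with $j_*(H^1(\langle B\rangle))$. The two arguments rest on the same vanishings ($H^1(\mathcal L)=0$ to make $\pi_*$ an isomorphism on $H^1$, and $H^1(TC(1))=0$ to make the map $P$, respectively your $r_*$, injective), but your factorization through $\langle B\rangle$ avoids the cocycle bookkeeping and makes explicit the pointwise-independence of the $u_i$ that the paper leaves implicit in the claim that $G(1)$ is trivial over $X_W$. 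Two small caveats: the assertion that $H^1(TC(1))=0$ holds \emph{precisely} when $d>4(g-1)$ should read ``whenever'' (the degree bound is sufficient for, not equivalent to, the vanishing), and neither you nor the paper verifies that $m-n+4\neq 0$ before dividing.
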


\bigskip

{\bf Remark}. \par
(1)   Theorem 1.2  proves that if $\sigma(c_0, f_0, A)\neq 0$,  
then $C_0$ can't deform to all the hypersurfaces in $A$ in 
the first order.  Thus
$\sigma(c_0, f_0, A)\neq 0$  gives us an obstruction to the 
deformations of $C_0$ to other hypersurfaces. However the name, ``obstruction" may be misleading because
 $\sigma(c_0, f_0, A)$ depends on $A$ and $\phi_3$, i.e. depends how $C_0$ deforms to other hypersurfaces in $A$.
\smallskip

(2) There are lots of work on the bound of genus of the subvariety of a generic hypersurface or a generic
complete intersection.
We don't mean to include a complete list of results in this area. We only mention those that have a direct relation 
with our results.  Corollary  1.3 has some overlap with the results of Clemens in [2], where he showed that
a lower bound of genus is 
$$ {1\over 2} (h-2n+1) d+1.$$
But his bound is not sharp (see [8]),  it implies 
that there are no immersed elliptic curves of in a generic hypersurface of degree
$$h\geq 2n.$$

\bigskip

In section 2 below, we describe and prove a theorem of H. Clemens' on the deformation of hypersurfaces. 
This is the starting point for the entire paper. We include it here in its completeness because it is not published and we need to use it in this paper. 
In section 3, we study the deformation of the curve $C_0$ with
the deformation of hypersurface to derive a necessary condition of the pair to deform in the first order. 
 In section 4, we apply the result from section 3 to obtain a genus formula for a smooth curve on a smooth
hypersurface (no need to be generic). In section 5, we apply the genus formula to obtain the bounds of hypersurfaces
that imply theorem 1.1. 

\bigskip

\section{Deformation of the hypersurface}  The main idea of the proof is to transform the problems
of $T\mathbf P^n$ to similar types of problems of some isomorphic bundle ${TX_A(1)\over G(1)}$. 
The exact sequence from this quotient,  in return,  gives a way to the study of $T\mathbf P^n$.
The existence of the
first order deformations of the pair $C_0, f_0$  allows us to derive properties in this
exact sequence. 
 Thus the isomorphism between 
$T\mathbf P^n$ and ${TX_A(1)\over G(1)}$ serves as an important bridge between two different realms. 
In this section, we introduce the construction of the vector bundle ${TX_A(1)\over G(1)}$, provided and proved by Herb Clemens ([1]). The curve $C_0$ is not involved. 

\medskip

Recall $L_0, \cdots, L_h\in H^0(\mathcal O_{\mathbf P^n}(1))$ satisfy the formulas (1.4) as in
assumption (1), and
 \begin{equation} F(a_1, \cdots, a_h, x)=f_0(x)+\sum_{i=0}^h a_i L_0(x)\cdots\hat L_i(x)\cdots L_{h}(x), \quad (omit \ L_i)\end{equation}
is the universal polynomial. Thus
$$\{F=0\}=X_A\subset A\times \mathbf P^n.$$
is also the universal hypersurface, which is smooth. 
Let $W\subset \mathbf P^n$ denote the complement of the proper  subvariety
\begin{equation} \cup_{h\geq  j>i\geq 0}\{L_i=L_j=0\}.\end{equation}
Let  \begin{eqnarray} && X_W=X_A\cap (A\times W)\\&&
f_0^W=f_0\cap W.\nonumber\end{eqnarray}
Recall
\begin{equation} u_i=L_0{\partial\over \partial a_0}-L_i{\partial\over \partial a_i}, i=1, \cdots, h
\end{equation} are sections
of $ TA\otimes \mathcal O_{W}(1)$. Since $u_i$ annihilate $F$, they are tangent to
$X_W$. So let 
\begin{equation} G(1)\subset  TX_W(1)\end{equation}
be the vector bundle of rank $h$ over $X_W$ that is generated by the sections
$u_i$.  Note that because of the condition (2.2) on $L_j, j=0, \cdots, h$, 
$G(1)$ is a trivial bundle of rank $h$ over $X_W$.
 
\bigskip
For any smooth varieties $V_1, V_2$, let
$$T_{V_1/V_2}$$ denote the relative tangent bundle of $V_1$ over $V_2$, i.e. it is
the bundle $TV_1\oplus \{0\}$ over the variety $V_1\times V_2$. 
\bigskip

The following theorem 2.1 is communicated to us by H. Clemens ([1]), who after learning our
construction of the section $u_i$,  proved: 
\bigskip

\begin{theorem} ({\rm H. Clemens})
\begin{equation} {TX_W(1)\over G(1)}\simeq T_{W/A}(1),
\end{equation}
where $T_{W/A}(1)$ is restricted to $X_W$. 

\end{theorem}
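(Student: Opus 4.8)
The plan is to realize the claimed isomorphism as the image of the differential of the second projection. Write $p\colon X_W\to W$ for the restriction of the projection $A\times W\to W$. By the definition of the relative tangent bundle adopted above, $T_{W/A}(1)$ restricted to $X_W$ is exactly $p^\ast(TW(1))$, so it suffices to produce an isomorphism between $TX_W(1)/G(1)$ and $p^\ast(TW(1))$. I would obtain this from the (twisted) differential $dp\colon TX_W(1)\to p^\ast(TW(1))$ by establishing two facts: that $dp$ is surjective, i.e. $p$ is a submersion, and that its kernel is precisely $G(1)$. Granting both, the short exact sequence $0\to G(1)\to TX_W(1)\stackrel{dp}{\rightarrow} p^\ast(TW(1))\to 0$ yields the theorem.

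To control $p$ I would first describe its fibres. Writing $M_i=L_0\cdots\hat L_i\cdots L_h$, the fibre of $p$ over a point $x\in W$ is the affine subspace $\{a : \sum_{i=0}^h a_iM_i(x)=-f_0(x)\}$ of $A$, cut out by the linear functional $\lambda_x(v)=\sum_{i=0}^h v_iM_i(x)$. The key observation is that $\lambda_x=0$ if and only if every $M_i(x)$ vanishes, which occurs exactly when at least two of the $L_j(x)$ vanish, i.e. exactly when $x$ lies in $\cup_{j>i}\{L_i=L_j=0\}$. Hence for $x\in W$ the functional $\lambda_x$ is nonzero, the fibre is a smooth affine hyperplane, and solving $dF(v_a,w)=0$ for $v_a$ given any $w\in T_xW$ (possible since $\lambda_x$ surjects onto $\mathbf C$) produces a lift of $w$, so $dp$ is surjective at every point of $X_W$.

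Next I would identify $\ker(dp)$, the relative tangent bundle of $p$, whose fibre at $(a,x)$ is a twist of $\ker\lambda_x=\{v : \lambda_x(v)=0\}$, of dimension $h$. A direct computation gives $\lambda_x(u_i)=L_0(x)M_0(x)-L_i(x)M_i(x)=\prod_{j=0}^h L_j(x)-\prod_{j=0}^h L_j(x)=0$, reflecting the already noted fact that the $u_i$ annihilate $F$; thus each $u_i$ is a section of $\ker(dp)$ and $G(1)\subseteq\ker(dp)$. To upgrade this to equality I would check that $u_1,\dots,u_h$ are linearly independent at every point of $W$: if $\sum_i c_iu_i=0$ then $c_iL_i(x)=0$ for $i\ge 1$ and $L_0(x)\sum_i c_i=0$, and since $x\in W$ forces at most one $L_j(x)$ to vanish, one deduces all $c_i=0$. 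Therefore $G(1)$ is a rank-$h$ subbundle inside the rank-$h$ bundle $\ker(dp)$, and the two coincide.

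Combining these steps, $dp$ fits into $0\to G(1)\to TX_W(1)\to p^\ast(TW(1))\to 0$, which identifies $TX_W(1)/G(1)$ with $p^\ast(TW(1))=T_{W/A}(1)|_{X_W}$, as claimed. The only real subtlety, and the step I expect to require the most care, is showing that $G(1)\subseteq\ker(dp)$ is an equality and that $dp$ is surjective at \emph{every} point of $W$, including the locus where a single $L_j$ vanishes; both reduce to the single fact that forming $W$ by deleting the pairwise intersections $\{L_i=L_j=0\}$ is exactly what guarantees $\lambda_x\neq 0$ and the independence of the $u_i$. One could instead verify the isomorphism fibrewise and check that it is induced by a global bundle map, but the formulation through the projection $p$ makes both the surjectivity and the kernel computation transparent.
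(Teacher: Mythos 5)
Your proof is correct, but it takes a genuinely different route from the paper's. The paper never touches the projection $X_W\to W$ directly: it embeds both $TX_W(1)/G(1)$ and $T_{W/A}(1)$ into the common ambient bundle $T(A\times W)(1)/G(1)$, shows each sits in a short exact sequence whose rank-one quotient has first Chern class $(h+1)c_1(\mathcal O_{\mathbf P^n}(1))$, and splits both sequences with the single section $\sigma$ coming from $s\,\partial/\partial a_0$ (whose zero divisor is $div(sL_1\cdots L_h)$); comparing the two resulting decompositions $L_s\oplus TX_W(1)/G(1)=L_s\oplus T_{W/A}(1)=T(A\times W)(1)/G(1)$ gives an abstract isomorphism. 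You instead exhibit the isomorphism canonically as the map induced by $dp$, reducing everything to two pointwise statements: surjectivity of $dp$, which follows from the nonvanishing of the functional $\lambda_x(v)=\sum_i v_iM_i(x)$ on $W$, and the identification $\ker(dp)=G(1)$, which follows from $\lambda_x(u_i)=0$ together with the everywhere linear independence of $u_1,\dots,u_h$ on $W$ (a rank count then forces equality). Both arguments hinge on exactly the same geometric input --- deleting $\cup_{j>i}\{L_i=L_j=0\}$ is what makes $\lambda_x$ nonzero and the $u_i$ independent, equivalently what makes $\sigma$ split the paper's sequences --- but your version buys an explicit, functorial isomorphism (it is literally the differential of the second projection) and avoids the Chern-class bookkeeping, while the paper's version is the one Clemens communicated and is organized so that the splitting section $\sigma$ and the ambient quotient $T(A\times W)(1)/G(1)$ are available for reuse later. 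Your identification of $T_{W/A}(1)|_{X_W}$ with $p^\ast(TW(1))$ is consistent with the paper's (idiosyncratic) definition of the relative tangent bundle as $TV_1\oplus\{0\}$ on the product, so no gap arises there.
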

\begin{proof} Consider the exact sequence
\begin{equation}\begin{array}{ccccccccc}
0&\rightarrow {TX_W(1)\over G(1)}&\rightarrow {T(A\times W)(1)\over G(1)} 
&\rightarrow &\mathcal D &\rightarrow 0.\end{array}\end{equation}
of bundles over $X_W$, 
where $\mathcal D$ is some quotient bundle over $X_W$.
Easy to see \begin{equation} c_1(\mathcal D)=c_1(\mathcal O_{\mathbf P^n}(h+1))|_{X_w}.\end{equation}
Let $s$ be a generic section of $\mathcal O_{\mathbf P^n}(1)$ that does not have common zeros with
$L_i, i=0, \cdots, h$. Let $\sigma$ be the reduction of  $s{\partial \over \partial a_0}$
in ${T(A\times W)(1)\over G(1)}$. Notice the zeros of $\sigma$ is exactly
\begin{equation} div(\sigma)=div(s L_1\cdots L_h).\end{equation}
Since $s L_1\cdots L_h\in H^0(\mathcal O_{\mathbf P^n}(h+1))$, $\sigma$ splits the sequence (2.7).
If $L_s\subset {T(A\times W)(1)\over G(1)}$ is the line bundle generated by $\sigma$, 
\begin{equation} L_s\oplus {TX_W(1)\over G(1)}={T(A\times W)(1)\over G(1)},\end{equation}
as bundles over $X_W$.
Secondly, we have another exact sequence
\begin{equation}\begin{array}{ccccccccc}
0&\rightarrow T_{W/A}(1)&\rightarrow {T(A\times W)(1)\over G(1)} 
&\rightarrow &\mathcal D' &\rightarrow 0.\end{array}\end{equation}
of bundles over $X_W$,
where $\mathcal D'$ is some quotient bundle over $X_W$.
By the direct calculation (note $G(1)$ is a trivial bundle):
$$c_1(\mathcal D')=c_1(c_0^\ast(T_{A/W}(1)))=(h+1)(c_1(\mathcal O_{\mathbf P^n}(1)))|_{X_W}$$
As above,  $\sigma$ splits this sequence (2.11). Hence
\begin{equation} L_s\oplus T_{W/A}(1)={T(A\times W)(1)\over G(1)}.\end{equation}
Comparing the formulas  (2.10), (2.12), we obtain
\begin{equation} {TX_W(1)\over G(1)}\simeq T_{ W/A}(1), \end{equation}
over $X_W$.\quad 
\end{proof}

\bigskip

\section{Deformations of curves to other hypersurfaces}

In this section, we prove theorem 1.2 and a formula for $dim(Im(\phi_3))$.
\bigskip

\begin{proof} of theorem 1.2: 
In  theorem 1.2, sections $L_1, \cdots, L_h$ satisfy both conditions in formulas (1.4) and (1.6).
Consider the exact sequence of Clemens' quotient ${TX_W(1)\over G(1)}$, 
$$\begin{array}{ccccccccc}0 & \rightarrow &\bar c_0^\ast(G(1))& \rightarrow &
 \bar c_0^\ast(TX_W(1))& \rightarrow &\bar c_0^\ast({TX_W(1)\over G(1)})
& \rightarrow & 0\end{array} $$
This induces the long exact sequence \begin{equation}\begin{array}{ccccccc}
H^0(\bar c_0^\ast(TX_A(1))) & \stackrel{\phi_1}\rightarrow & H^0( \bar c_0^\ast({TX_W(1)\over G(1)})) &&&& \\
& &\downarrow\scriptstyle{\phi_2}&&&& \\
&& H^1(\bar c_0^\ast(G(1)))&&&& \\
& &\downarrow\scriptstyle{\phi_3} &&&&\\ &&
H^1(\bar c_0^\ast(TX_A(1)))
& \stackrel{\phi_4}\rightarrow & H^1( \bar c_0^\ast({TX_W(1)\over G(1)}))& \rightarrow & 0.\end{array}\end{equation}

This exact sequence and theorem (2.1) which says
$$c_0^\ast({TX_A(1)\over G(1)})\simeq c_0^\ast(T\mathbf P^n(1)),$$ 
yield  
\begin{equation} dim(Im(\phi_3))+h^1(c_0^\ast(T\mathbf P^n(1))-h^1(c_0^\ast(TX_A(1)))=0.\end{equation}
 
Next we calculate $h^1(c_0^\ast(TX_A(1)))$. Since  $P_A^s$ is surjective, we obtain
\begin{equation} c_0^\ast(TX_A(1))\simeq (\oplus_{h+1}\mathcal L) \oplus c_0^\ast(Tf_0(1)), \end{equation}
 where 
each copy $\mathcal L$ is $ \mathcal L\simeq \mathcal O_C\otimes \mathcal L$,  and the trivial bundle $\mathcal O_C$ 
is generated by the section $${\partial \over \partial a_k}-\beta_k, k=0, \cdots, h,$$
where $a_k$ are affine coordinates of $A'$ defined in the assumption (1). 
We choose one  $\beta_k\in c_0^\ast(T\mathbf P^n)$ for each ${\partial \over \partial a_k}$.
Then 
\begin{equation}
h^1(\bar c_0^\ast(TX_A(1)))=(h+1)h^1(\mathcal L)+h^1(c_0^\ast(Tf_0(1)))
\end{equation}
Using Riemann-Roch, we obtain that

\begin{eqnarray}   &&h^1(c_0^\ast(Tf_0(1))) \\
 &&=h^0(c_0^\ast(Tf_0(1)))-\biggl(Ch(c_0^\ast(f_0(1)))\cdot Tod(TC)\biggr)\nonumber\\
 &&=h^0(c_0^\ast(Tf_0(1)))-\biggl(c_1(c_0^\ast(Tf_0(1)))+{n-1\over 2}(TC)\biggr)\nonumber\\
 &&=h^0(c_0^\ast(Tf_0(1)))-\biggl(c_1(c_0^\ast(T\mathbf P^n(1)))-(h+1)d+{n-1\over 2}c_1(TC)\biggr)\nonumber\\
&&=h^0(c_0^\ast(Tf_0(1)))+(h-2n)d+(n-1)(g-1)\nonumber
\end{eqnarray}
\par

Combining  formulas  (3.2), (3.4) and (3.5), we proved theorem 1.2.
\end{proof} 

\bigskip

\begin{lemma} Assume $P_A^s$ is surjective and $d> 4(g-1)$.  Then
$$dim(Im(\phi_3))=mg-k,$$
where $k=dim(ker(\phi_5))$ (see formula (1.9)).
\end{lemma}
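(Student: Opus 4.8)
The plan is to compute $\dim(Im(\phi_3))$ by transporting it, via an isomorphism coming from the hypothesis $d>4(g-1)$, to the image of $\phi_5$, whose dimension is visibly $mg-k$. First I would record that, since $G(1)$ is a trivial bundle of rank $h$ over $X_W$ and $\bar c_0(C)\subset X_W$ (guaranteed by (1.6)), we have $\bar c_0^\ast(G(1))\simeq \oplus_h\mathcal O_C$, so $H^1(\bar c_0^\ast(G(1)))\simeq \oplus_h H^1(\mathcal O_C)$ has dimension $hg$, and this group is built from the $h$ generators $u_1,\dots,u_h$. Let $\rho:\bar c_0^\ast(G(1))\to N_{c_0}f_0(1)$ be the bundle map obtained by composing the inclusion $\bar c_0^\ast(G(1))\hookrightarrow \bar c_0^\ast(TX_A(1))$ with $\pi$ and with the normal projection $c_0^\ast(Tf_0(1))\to N_{c_0}f_0(1)$, and let $\psi:H^1(\bar c_0^\ast(TX_A(1)))\to H^1(N_{c_0}f_0(1))$ be the map on $H^1$ induced by $\pi$ followed by that same normal projection (the $H^1$ analogue of $\phi_6$). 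Since $\phi_3$ in (3.1) is induced by the inclusion $G(1)\hookrightarrow TX_A(1)$, functoriality gives $\psi\circ\phi_3=\rho_\ast$, the map on $H^1$ induced by $\rho$.

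The heart of the argument is to show that $\psi$ is an isomorphism, and here $d>4(g-1)$ enters twice. Using the splitting (3.3), $H^1(\bar c_0^\ast(TX_A(1)))=(\oplus_{h+1}H^1(\mathcal L))\oplus H^1(c_0^\ast(Tf_0(1)))$, and $\pi$ induces on $H^1$ the projection onto the last summand. Since $\deg\mathcal L=d>2g-2$ (which follows from $d>4(g-1)$ for $g\geq 1$), we get $H^1(\mathcal L)=0$, so this projection is an isomorphism onto $H^1(c_0^\ast(Tf_0(1)))$. For the normal projection I would use the twisted normal bundle sequence $0\to TC(1)\to c_0^\ast(Tf_0(1))\to N_{c_0}f_0(1)\to 0$ and its long exact sequence; since $\deg(TC(1))=d+2-2g>2g-2$ is exactly equivalent to $d>4(g-1)$, we have $H^1(TC(1))=0$, hence $H^1(c_0^\ast(Tf_0(1)))\to H^1(N_{c_0}f_0(1))$ is an isomorphism. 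Composing, $\psi$ is an isomorphism, and therefore $\dim(Im(\phi_3))=\dim\psi(Im(\phi_3))=\dim(Im(\rho_\ast))$.

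It remains to identify $Im(\rho_\ast)$ with $Im(\phi_5)$. Because every generator $u_i$ is sent by $\phi_6\circ\phi_0$ into $B$, the bundle map $\rho$ factors as $\oplus_h\mathcal O_C \stackrel{s}\rightarrow B\otimes\mathcal O_C\simeq\oplus_m\mathcal O_C \rightarrow c_0^\ast(N_{c_0}f_0(1))$, where $s$ comes from the surjection $\mathbf C^h\to B$, $u_i\mapsto (\phi_6\circ\phi_0)(u_i)$, and the second map is the bundle map defining $\phi_5$ in (1.9). Since $s$ is a surjection of trivial bundles, the induced $s_\ast:H^1(\oplus_h\mathcal O_C)\to H^1(\oplus_m\mathcal O_C)$ is surjective, so $Im(\rho_\ast)=Im(\phi_5)$. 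Finally $\dim(Im(\phi_5))=\dim H^1(\oplus_m\mathcal O_C)-\dim\ker(\phi_5)=mg-k$, which yields $\dim(Im(\phi_3))=mg-k$.

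The main obstacle I expect is the second paragraph: carefully pinning down that $\pi$ really induces the coordinate projection in the splitting (3.3), and that the twisted normal bundle sequence produces exactly the stated degree bound, so that both vanishings $H^1(\mathcal L)=0$ and $H^1(TC(1))=0$ — and hence the isomorphism $\psi$ — hinge precisely on $d>4(g-1)$. The factorization through $B\otimes\mathcal O_C$ and the surjectivity of $s_\ast$ are routine once the images of the generators $u_i$ are understood.
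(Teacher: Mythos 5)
Your proof is correct and follows essentially the same route as the paper: the splitting (3.3) together with the two vanishings $H^1(\mathcal L)=0$ and $H^1(TC(1))=0$ (both consequences of $d>4(g-1)$) reduce the computation to the trivial bundle $B\otimes\mathcal O_C$, whose $H^1$ has dimension $mg$ and maps to $H^1(N_{c_0}f_0(1))$ with kernel of dimension $k$. The only organizational difference is that you transport $Im(\phi_3)$ into $H^1(N_{c_0}f_0(1))$ via the isomorphism $\psi$ and invoke functoriality plus the surjection $\mathbf C^h\to B$, whereas the paper compares $Im(\phi_3)$ with $Im(\phi_7)$ inside $H^1(c_0^\ast(Tf_0(1)))$ by an explicit \u{C}ech-cocycle computation; both arguments hinge on the injectivity of the same map $P$ induced by the normal bundle sequence.
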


\begin{proof} Because $P_A^s$ is surjective, we have the decomposition  as in formula (3.3), 
$$c_0^\ast(TX_A(1))\simeq \oplus_{h+1} \mathcal L \oplus c_0^\ast(Tf_0(1)).$$
Then $$H^1(c_0^\ast(TX_A(1))\simeq \oplus_{h+1}H^1(\mathcal L)\oplus H^1(c_0^\ast(Tf_0(1))).$$
Since $d>2( g-1)$, $H^1(\mathcal L)=0$. Thus 
$$H^1(c_0^\ast(TX_A(1))\simeq  H^1(c_0^\ast(Tf_0(1))).$$
 Notice that
$$m=dim(B),$$
is the dimension of the image of $\phi_6\circ \phi_0$, i.e.,  
$$B=span (\{c_0^\ast(L_0\beta_0-L_k\beta_k)\}_{all\ k})\subset H^0(c_0^\ast(N_{c_0}f_0(1))),$$
where
$c_0^\ast(L_0\beta_0-L_k\beta_k)$ are reduced to $H^0(N_{c_0}f_0(1))$.
Now consider the commutative diagram
$$\begin{array} {ccccc}
H^1(\oplus_m\mathcal O_C) &\stackrel{\phi_7}\rightarrow &H^1(c_0^\ast(Tf_0(1)))\simeq H^1(c_0^\ast(TX_A(1))
&\stackrel{\phi_3} \leftarrow & H^1(c_0^\ast(G(1)))\\
\downarrow\scriptstyle{\phi_5} && \downarrow\scriptstyle{P} &&\\
H^1(c_0^\ast(N_{c_0}f_0(1))) &= &H^1(c_0^\ast(N_{c_0}f_0(1))) &&
\end{array}$$
where $\phi_7$ is induced from the bundle map of the trivial bundle 
$$B\otimes \mathcal O_C\simeq \oplus_m\mathcal O_C$$
over $C$ to $c_0^\ast(Tf_0(1))$.\footnote{$\phi_7$ is well-defined, because $B\otimes \mathcal O_C$ is a 
a trivial bundle.} 
 There is an exact sequence for the second vertical map $P$,
\begin{equation}\begin{array} {ccccc} H^1(TC(1)) &\rightarrow &
H^1(c_0^\ast(Tf_0(1))) &\stackrel{P}\rightarrow &H^1(c_0^\ast(N_{c_0}f_0(1))).
\end{array}\end{equation}
Because $d>4(g-1)$ in both cases where $g=0$ and $g \neq 0$,  $ H^1(TC(1))=0$. 
Thus $P$ is injective. Then we obtain  
\begin{equation} dim(Im(\phi_7))=mg-dim(ker(\phi_5))=mg-k.\end{equation}
 Then it suffices to prove that
$$Im(\phi_3)=Im(\phi_7).$$

Let $\{U_j\}$ be an affine open  covering of $C$. Let
$$\{\epsilon_k^{j_1 j_2}\}, k=1, \cdots, h$$ be the representative of an element in 
the \u{C}ech-cohomology for $$H^1(c_0^\ast(G(1)))\simeq H^1(\oplus_h \mathcal O_C).$$

By the definition of $\phi_3$, the image of $\phi_3$ is just the co-cycle
$$\{\sum_k \epsilon_k^{j_1j_2}(c_0^\ast(L_0\beta_0-L_k\beta_k))|_{U_{j_1}\cap U_{j_2}}\}\in H^1(c_0^\ast(Tf_0(1)),$$
where $c_0^\ast(L_0\beta_0-L_k\beta_k)$ is regarded as a section in $H^0(c_0^\ast(Tf_0(1))$ 
(without modular $TC_0$ as for $B$). Notice $H^1(TC(1))=0$. Then
we have the decomposition 
$$H^0(c_0^\ast(Tf_0(1)))\simeq H^0(TC(1))\oplus H^0(c_0^\ast(N_{c_0}f_0(1))).$$
This decomposition shows that any cycle in  $H^0(c_0^\ast(N_{c_0}f_0(1)))$ could have a representative 
in $H^0(c_0^\ast(Tf_0(1)))$.  Then it suffices to show
that the co-cycle $$\alpha=\{\sum_k \epsilon_k^{j_1j_2}(c_0^\ast(L_0\beta_0-L_k\beta_k))|_{U_{j_1}\cap U_{j_2}}\}$$
is zero if the global sections $L_0\beta_0-L_k\beta_k$ are tangent to $C_0$.  This is indeed true because $P$ is injective. More specifically, if 
all $L_0\beta_0-L_k\beta_k$ are tangent to $C_0$, 
$$\alpha\in Image(H^1(TC\otimes \mathcal L))\subset  H^1(c_0^\ast(Tf_0(1))).$$
Again because $d>4(g-1)$, $H^1(TC\otimes \mathcal L)=0$. Thus $\alpha=0$. 
 \end{proof}

\bigskip

\section{Genus formula for smooth curves in  smooth  hypersurfaces in $\mathbf P^n$}\par

In this section we apply  theorem 1.2  to study the genus of curves $C_0$  in a smooth hypersurface $f_0$  in $\mathbf P^n$.
\par
We are going to prove corollary  1.3: \par

\begin{proof} of  corollary 1.3 : 

Because $deg(\mathcal L^\ast\otimes K)=-d+2g-2<0$, by the Serre-duality,
 $$h^1(\mathcal L)=h^0(\mathcal L^\ast\otimes K)=0.$$

Consider the twisted Euler sequence pulled back to $C$:
$$\begin{array}{ccccccccc}
0 & \rightarrow &\mathcal O_C\otimes c_0^\ast(\mathcal O_{\mathbf P^n}(1)) &\rightarrow &
(\oplus_{n+1} c_0^\ast (\mathcal O_{\mathbf P^n}(2))) &\rightarrow &
c_0^\ast(T\mathbf P^n(1))
&\rightarrow & 0.\end{array}$$
Then we have the exact sequence on cohomologies
\begin{equation}\begin{array}{ccccc}
H^1(\oplus_{n+1} c_0^\ast (\mathcal O_{\mathbf P^n}(2)))&\rightarrow &
H^1(c_0^\ast(T\mathbf P^n(1)))&\rightarrow & H^2(\mathcal O_C\otimes \mathcal O_{\mathbf P^n}(1))\end{array}\end{equation}
Since $d> g-1$, $ H^1(\oplus_{n+1} c_0^\ast (\mathcal O_{\mathbf P^n}(2)))=0$. By the Grothendieck vanishing theorem ([5]), 
 $H^2(\mathcal O_C\otimes c_0^\ast(\mathcal O_{\mathbf P^n}(1)))=0$. Thus $ H^1(c_0^\ast(T\mathbf P^n(1)))=0$.
It follows from  formula (1.7) and  lemma 3.1 that
\begin{equation} (h-2n)d+(n-1)(g-1)+h^0(c_0^\ast(Tf_0(1))-mg+k=0.\end{equation}

Consider the exact sequence
$$\begin{array}{ccccccccc}
0 & \rightarrow &TC\otimes \mathcal L &\rightarrow &
c_0^\ast(Tf_0(1)) &\rightarrow &
c_0^\ast(N_{c_0}f_0(1))
&\rightarrow & 0.\end{array}$$
It induces 
$$\begin{array}{ccccccccc}
0 & \rightarrow & H^0(TC\otimes \mathcal L )&\rightarrow &
H^0(c_0^\ast(Tf_0(1))) &\rightarrow &
H^0(c_0^\ast(N_{c_0}f_0(1)))
&\rightarrow & 0.\end{array}$$
Hence
\begin{equation} h^0(c_0^\ast(Tf_0(1))=h^0(c_0^\ast(N_{c_0}f_0(1)))+d+3-3g.\end{equation}
Combining formulas (4.2), (4.3), we complete the proof.

\end{proof}

\section{Smooth elliptic curves in a smooth hypersurfaces in $\mathbf P^n$}\par

In this section, we turn our attention to elliptic curves. \medskip

\begin{proposition} Assume $f_0$ is a smooth hypersurface of degree $h$ in $\mathbf P^n$ and $C_0$ is a smooth elliptic curve in $f_0$.  
\par
(1) Let $A$ be the parameter space
of hypersurfaces containing $f_0$ as in theorem 1.2. If  $P_A^s$ is surjective, then
$h\leq 2n-1$.\par

(2) If $P_E^s$ is surjective, then $h\leq 2n-2$

\end{proposition}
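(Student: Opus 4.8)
The plan is to specialize the genus formula of Corollary 1.3 to the case $g=1$ and extract an inequality on $h$. For part (1), I would set $g=1$ in equation (4.2), which was derived under the hypothesis that $P_A^s$ is surjective and $d>4(g-1)=0$ (automatically satisfied since $d>0$ for an embedded curve). With $g=1$, the term $(n-1)(g-1)$ vanishes and equation (4.2) collapses to
\begin{equation}
(h-2n)d+h^0(c_0^\ast(Tf_0(1)))-m+k=0.
\end{equation}
The strategy is then to bound the non-obvious terms. Since $k=\dim(\ker(\phi_5))$ and $\phi_5$ maps out of $H^1(\oplus_m\mathcal O_C)$, whose dimension is $mg=m$ when $g=1$, we have $0\le k\le m$, so $-m+k\le 0$. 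The remaining nontrivial term is $h^0(c_0^\ast(Tf_0(1)))$, which I would need to bound from above in terms of $d$.

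The key step is to control $h^0(c_0^\ast(Tf_0(1)))$. Using the normal bundle sequence
\begin{equation}
0\to TC\otimes\mathcal L\to c_0^\ast(Tf_0(1))\to c_0^\ast(N_{c_0}f_0(1))\to 0
\end{equation}
together with $h^0(TC\otimes\mathcal L)=h^0(\mathcal L)=d$ (for $g=1$, since $TC\cong\mathcal O_C$ and $\deg\mathcal L=d>0$), equation (4.3) gives $h^0(c_0^\ast(Tf_0(1)))=h^0(N_{c_0}f_0(1))+d$. So the identity becomes $(h-2n+1)d+h^0(N_{c_0}f_0(1))-m+k=0$. Now the geometric content is that the sections $c_0^\ast(L_0\beta_0-L_k\beta_k)$ spanning $B$ are among the global sections of $N_{c_0}f_0(1)$, so $m\le h^0(N_{c_0}f_0(1))$; combined with $k\ge 0$ this yields $-m+k\ge -h^0(N_{c_0}f_0(1))$. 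Substituting, $(h-2n+1)d\le 0$, and since $d>0$ we conclude $h\le 2n-1$. The delicate point I expect to be the main obstacle is justifying the inequality $m\le h^0(N_{c_0}f_0(1))$ rigorously — i.e. verifying that the map $\phi_6\circ\phi_0$ genuinely lands inside $H^0(N_{c_0}f_0(1))$ and that $B$ is a subspace thereof — and simultaneously controlling the interplay between $m$ and $k$ so the net contribution $-m+k$ has the right sign.

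For part (2), I would run the same computation with $S=E=\mathbf P(H^0(\mathcal O_{\mathbf P^n}(h)))$, the full linear system, under the hypothesis that $P_E^s$ is surjective. The difference is that deforming $f_0$ inside the entire space $E$ provides strictly more first-order deformations of $C_0$ in the normal direction than deforming within the restricted family $A$; concretely, the image $B$ of the normal-bundle sections is now as large as possible, forcing $m$ to be larger, and in particular one gains an extra unit in the bound. The plan is to show that surjectivity over all of $E$ improves the estimate $m\le h^0(N_{c_0}f_0(1))$ to a strict gain that sharpens $(h-2n+1)d\le 0$ to $(h-2n+2)d\le 0$, giving $h\le 2n-2$. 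Finally, Theorem 1.1 follows as a corollary: if $h\ge 2n-1$ then part (1) is violated, so $P_A^s$ cannot be surjective — meaning a smooth elliptic $C_0$ cannot deform in the first order to all nearby hypersurfaces, hence there are no smooth elliptic curves in a generic hypersurface of degree $h\ge 2n-1$.
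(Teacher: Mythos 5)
Your part (1) is essentially the paper's argument: set $g=1$ in the genus identity, use $B\subset H^0(N_{c_0}f_0(1))$ to get $m\le h^0(N_{c_0}f_0(1))$ and $k\ge 0$, and conclude $(h-2n+1)d\le 0$. That part is fine (and the inclusion $B\subset H^0(N_{c_0}f_0(1))$ you worry about is immediate from the definition of $\phi_6$, which lands in $H^0(N_{c_0}f_0(1))$ by construction).

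Part (2) has a genuine gap, and the idea you sketch points in the wrong direction. For $h=2n-1$ and $g=1$ the identity reads $h^0(N_{c_0}f_0(1))+k=m$, which together with $m\le h^0(N_{c_0}f_0(1))$ forces $m=h^0(N_{c_0}f_0(1))$ and $k=0$. So the contradiction one needs is that $m$ is \emph{strictly smaller} than $h^0(N_{c_0}f_0(1))$, i.e.\ that $B$ is a proper subspace; your plan to show that surjectivity of $P_E^s$ makes $B$ ``as large as possible, forcing $m$ to be larger'' cannot work, since $m\le h^0(N_{c_0}f_0(1))$ always, and no mechanism is offered for upgrading $(h-2n+1)d\le 0$ to $(h-2n+2)d\le 0$. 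The paper's actual proof is a long explicit construction (Claim 5.1): fix a point $q\in C_0$ and a hyperplane $E_q\subset T_qf_0$, and choose $L_0,\dots,L_h$ (still satisfying the disjoint-zeros condition (1.6)) so that every generator $L_0\beta_0-L_k\beta_k$ of $B$ lies in $E_q$ at $q$; this requires showing the incidence variety $S_{E_q}$ is smooth of the expected dimension via a Jacobian computation. Then the surjectivity of $P_E^s$ over the \emph{full} linear system, together with the infinitesimal $GL(n+1)$-action, produces a section of $N_{c_0}f_0(1)$ not contained in $E_q$ at $q$, hence not in $B$, giving $m<h^0(N_{c_0}f_0(1))$ and the contradiction. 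None of this is present in your proposal. Finally, your deduction of Theorem 1.1 is also off: $h\ge 2n-1$ does not contradict the conclusion $h\le 2n-1$ of part (1); Theorem 1.1 requires part (2) (generic existence implies $P_E^s$ surjective, hence $h\le 2n-2$).
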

\medskip

Theorem 1.1 follows from proposition 5.1, because if there are smooth elliptic curves in generic hypersurfaces
$f_0$, then $P_E^s$ is surjective. Then proposition 5.1, part (2) says  $h\leq 2n-2$. This is the same assertion as
that in theorem 1.1.
\bigskip

{\bf Remark} \par
(1) A Clemens' theorem in [2] implies that there are no smooth elliptic curves  in  generic hypersurfaces of degree 
$$h\geq 2n$$ in the projective space $\mathbf P^n$. We use our method, theorem 1.2
 to improve Clemens' inequality
by $1$. \par
(2) Furthermore, our bound $2n-2$ only requires the first order deformation of the pair $C_0\subset f_0$. 
This also means the bound obtained with the condition of ``full" deformation of the pair may be sharper than our bound. 
To obtain a better bound, one may have to use higher order deformations of the pair $C_0\subset f_0$. This is indeed the case in [8] for rational curves, and in [3] for elliptic curves in sextic 3-folds. \par
Thus our bound comes from the existence of the abstract first order deformation, while Clemens and Ran's better bound (or Voisin's for rational curves) for
$n=4$  comes from the existence of  the ``full" deformation of the pair.  The different bounds represent the different deformations of
the pair.

\par
(3) Our inequality is not under  the ``full" deformation condition. Thus it is mostly not sharp if the ``full" deformation
of the pair is assumed. 
There are examples showing this: in the case of $n=4$, Clemens and Ran proved that there are no elliptic curves in generic sextic 
three-folds ([3]) (with the  assumption of ``full" deformation).  But it was conjectured by J. Harris and proved by G. Xu that our bound is sharp  for $n=3$ ([10]) with the assumption  of the ``full" deformation.  Thus we speculate 
the sharp bound of $h$ is not a polynomial in $n$ under any deformation conditions. 
This is contrary to the case of rational curves (the sharp upper bound  with the ``Full" deformation is $2n-3$ ). Also we should point it out that, in the proof, if we only require the weaker bound $h\geq 2n$, the $f_0$ only needs to deform to hypersurfaces in $A$ in the first order. \bigskip

\begin{proof} proof of proposition 5.1:   Let $C_0$ be a smooth elliptic curve in a smooth hypersurface $f_0\subset \mathbf P^n$ in 
$A$. By the genus formula in corollary 1.3, 
$$g={(h-2n+1)d+h^0(N_{C_0}f_0(1))-n+4+k\over m-n+4}.$$
Thus $$(h-2n+1)d=m-h^0(N_{C_0}f_0(1))-k.$$
Since $m\leq h^0(N_{C_0}f_0(1))$, $h\leq 2n-1$.

This shows a generic hypersurface $f_0$ in the family $A$ can't have a smooth elliptic curve 
if $h\geq 2n$.\footnote{ If $f_0$ is generic, this also can be derived from Clemens' result in [2]. But our $f_0$ is
in $A$ and is not generic.}

To prove  part (2), we only need  to come up with a contradiction for 
$h=2n-1$.    From genus formula, we have $$g={h^0(N_{c_0}f_0(1))-n+4+k\over m-n+4}.$$
Because $k\geq 0$, it suffices to prove that
$$h^0(N_{c_0}f_0(1))>m=dim(B).$$
(This contradicts $g=1$).
Note $B\subset H^0(N_{c_0}f_0(1))$. We would like to construct a section in $H^0(N_{c_0}f_0(1))$ but not in $B$.

This uses  the entire space $\mathbf P_E$ of hypersurfaces. 
Notice $m$ is determined by the sections $L_i\in H^0(\mathcal O_{\mathbf P^n}(1))$.
Let's carefully choose these sections $L_i$. Since  $P^s_E$ is surjective, then for $\alpha\in  H^0(\mathcal O_{\mathbf P^n}(h))$, there is a section $<\alpha>\in H^0(c_0^\ast(T\mathbf P^n))$ such that
$$(\alpha, <\alpha>)\in H^0(\bar c_0^\ast(TX_E)).$$
It is clear that  $<\alpha>$ is unique upto a section of $c_0^\ast(Tf_0)$.  First fix a point $q=c_0(t_0)\in C_0$.  
 Let $E_q$ be any fixed 
hyperplane in $T_qf_0$. Let
$L_0$ be a section in $$H_q := H^0(\mathcal O_{\mathbf P^n}(1)\otimes \mathcal I_q)$$  where
$\mathcal I_x$ is the ideal sheaf of $\{q\}\subset \mathbf  P^n$.   We define
\begin{equation}\begin{array} {c} S_{E_q}\subset H_q\times \biggl( H^0(\mathcal O_{\mathbf P^n}(1))\biggr)^{h}\\
S_{E_q}=\{(L_0, L_1, \cdots, L_h): <L_0L_1\cdots \hat L_i\cdots L_h>|_q\in E_q, i\neq 0\}
\end{array}\end{equation}
Next we claim
\par
{\bf Claim 5.1}: {\em   there are $E_q$ and 
$$L_0\in H_q , L_1,\cdots, L_h\in H^0(\mathcal O_{\mathbf P^n}(1))$$
such that
$$ (L_0, L_1, \cdots, L_h)\in S_{E_q}$$
and $L_0, L_1, \cdots, L_h$ satisfy formula (1.6), i.e.
$$\{L_i=0\}\cap \{L_j=0\}\cap C_0=\emptyset, i\neq j.$$
}
Let's prove the claim. For the worst, we may assume $H^0(c_0^\ast(Tf_0))=0$.\footnote{If $H^0(c_0^\ast(Tf_0))\neq 0$, we
should fix a decomposition of the linear space
$$H^0(c_0^\ast(T\mathbf P^n))=H^0(c_0^\ast(Tf_0))\oplus V.$$
Then take $<\alpha>$ to be the $V$-component of the inverse image of $\alpha$ in the decomposition. Such
$<\alpha>$ is unique.}
Then
$<\alpha>_q$ is a well-defined vector in $T\mathbf P^n|_q$ for any point $q\in C_0$. First 
for generic $L_0\in H_q$, and   generic 
$J,  L\in H^0(\mathcal O_{\mathbf P^n}(1))$, 
$$<L_0  L\cdots L>_q, <L_0 J L\cdots L>_q $$ are linearly independent 
vectors. Because 
if it was not true, by the genericity of all sections, $ <L_0 J L\cdots L>_q$ would've been  zero. Since 
$$L^{h-2}\in H^0(\mathcal O_{\mathbf P^n}(h-2))$$ 
for all generic $L$ linearly span the entire space
$$ H^0(\mathcal O_{\mathbf P^n}(h-2)).$$
Thus by the linearity again,  we see that  for any $$\alpha\in  H^0(\mathcal O_{\mathbf P^n}(h)),$$
$<\alpha>$ would've been  zero at the points of $C_0$ where $<\alpha>$ lies in $Tf_0$.  This is not true
because by the $GL(n+1)$ action on $\mathbf P_E\times \mathbf P^n$, for any $(n+1)\times (n+1)$ matrix $g$ with the trace being zero, 
$(-gf_0, gq)$ lies in $TX|_q$ (this is the infinitesimal action). \par
Now we can choose a subspace $E_q\subset Tf_0|_q$ of dimension $n-2$ such that $ <L_0  L\cdots L>_q$ lies in $E_q$ but 
$<L_0 J L\cdots L>_q$ does not. Also choose such $L_0, L$ that
they do not vanish simultaneously at any point of $C_0$.  Next we would like to prove that
$S_{E_q}$ is smooth at $(L_0, L, \cdots,  L)$. To show this, we consider the 
analytic subset $U_i=\{L+x_i J\}$ in each  $H^0(\mathcal O_{\mathbf P^n}(1)$, where
$x_i$ are complex numbers. Let $L_0'\in H_q$ be another generic 
section, and $U_0=\{L_0+x_0 L_0'\}$ such that
$$<L_0'L\cdots L>_q\not\in E_q.$$
Then $$S_{E_q}\cap (U_0\times U_1\times U_2\cdots \times U_h),$$
is a subset of  $\mathbf C^{h+1}$ parametrized by $\{(x_0, \cdots, x_h)\}$,   that is defined by
\begin{equation} g(x_0, \cdots, \hat x_i, \cdots, x_h)=0, i=1, \cdots, h.\end{equation}
for some multi-linear polynomial $g$ in $h$ variables. 
Let $$a'={<L_0'L\cdots L>\over E_a}\in {Tf_q\over E_q}\simeq \mathbf C, $$ and 
$$a={<L_0J L\cdots L>\over E_a}\in {Tf_q\over E_q}\simeq \mathbf C.$$
By our choice, $a\neq 0$ and $a'\neq 0$. The Jacobian matrix of the formula (5.2) at the origin(corresponding to
$(L_0L \cdots L)$) is 
\begin{equation}\left (\begin{array}{cccccc}   a' & a &a &\cdots & a &0\\
a' &0  &a &\cdots & a &a\\
 \vdots &\vdots &\vdots &\ddots & \vdots &\vdots \\
a' &a &\cdots &a & 0 &a
\end{array}\right)\end{equation}
This matrix is row-reduced to
\begin{equation} \left (\begin{array}{cccccc}   a' & a &0 &\cdots & a &0\\
0 & -a  &0 &\cdots & 0 &a\\
 \vdots &\vdots &\vdots &\ddots & \vdots &\vdots \\
0&0 &\cdots &0 & -a &a
\end{array}\right )\end{equation}
which has the full rank. This shows $ S_{E_q}$ is smooth 
at $(L_0, L, \cdots,  L)$. Next we shrink $ S_{E_q}$ around $(L_0, L, \cdots, L)$ to make it irreducible. That is to let 
$$S_{E_q, \epsilon}=S_{E_q}\cap (U_0^\epsilon \times U^\epsilon\cdots U^\epsilon)$$
for sufficiently small $\epsilon\in \mathbf C$, where $U^\epsilon$ is  
the open disk of $ H^0(\mathcal O_{\mathbf P^n}(1)$  centered at  $(L_0, L\cdots,   L)$ with radius $\epsilon$ and
$$U_0^\epsilon=\{L_0+x_0 L_0': |x_0|\leq \epsilon\}.$$
It is clear that $S_{E_q, \epsilon}$ is symmetric under the permutations of $L_1, \cdots, L_h$.
For any $L_1, L_2\in U^\epsilon$ which have no common zeros along $C_0$, by the dimension count and similar 
infinitesimal argument as in the formula (5.3), we can find other sections $L_0\in  H_q$ and 
$L_3, \cdots, L_h$ such that
$(L_0, L_1, L_2, \cdots, L_h)\in S_{E_q, \epsilon}$, i.e. the projection of $S_{E_q, \epsilon}$ to the second and third components, 
$$U^\epsilon\times U^\epsilon$$ is surjective. 
Since $S_{E_q, \epsilon}$ is irreducible (because it is smooth) and symmetric, we proved that for the generic
point $(L_0, L_1,\cdots,  L_h)\in S_{E_q, \epsilon}$, $L_i, L_j, i\neq j, i\neq 1\neq j$ do not have
common zeros along $C_0$. Also $L_0$ does not have common zeros with any of other $ L_i$ along $C_0$ because
$L_0$ does not have common zeros with $L$ along $C_0$ for the center $(L_0, L, \cdots, L)\in S_{E_q, \epsilon}$.
This proves the claim 5.1.
Let  $ L_0,  L_1, \cdots,  L_h$ satisfy the claim 5.1.  Also let 
$$L_0< L_1\cdots  L_h>-L_k<L_0  L_1\cdots \hat L_k\cdots  L_h>
\in H^0(c_0^\ast(Tf_0(1))), $$
lie in $E_q$ at $q$ for all $k\neq 0$ (where $\hat \cdot$  means ``omitting"). Then we apply the 
sections $L_0,  L_1, \cdots, L_h$ to construct 
the subspace $A$ as in section 1. We obtain the integer $m$ which is the dimension of corresponding $B$. Then each
section $\beta\in B$ must lie in $E_q$ at $q$. Next we construct a section of
$H^0(c_0^\ast(N_{c_0}f_0(1))$ not in $B$. 
 By the $GL(n+1)$ action on $\mathbf P_E\times \mathbf P^n$, there are
sections $L_1', \cdots, L_{h-1}'$ such that 
$$<L_0 L_1'\cdots L_{h-1}'>\in H^0(c_0^\ast(T\mathbf P^n))$$ 
does not lie in $E_q$ at $q$, where $L_0\in H_q$.  Let $L_h'\in H^0(c_0^\ast(T\mathbf P^1)) $ be any section not in
$H_q$. 
This shows that 
$$L_0<L_1'\cdots L_{h}'>-L_h'<L_0 L_1'\cdots L_{h-1}'>$$ is reduced to a non-zero section in
$H^0(c_0^\ast(N_{c_0}f_0(1))$, but it is not in 
$$B\subset H^0(c_0^\ast(N_{c_0}f_0(1))),$$
because it does not lie in $E_q$ at $q$. 
Thus 
$$dim(H^0(c_0^\ast(N_{c_0}f_0(1))))>m=dim(B).$$
We complete the proof.\medskip

\end{proof}

\section*{Acknowledgments}
We would like to thank H. Clemens for his generous help and constant encouragement, especially for his enlightening 
communication of  theorem (2.1).

\end{document}